\newtheorem{theorem}{Theorem}[section]
\newtheorem{proposition}[theorem]{Proposition}
\newtheorem{lemma}[theorem]{Lemma}
\newtheorem{corollary}[theorem]{Corollary}
\theoremstyle{definition}
\newtheorem{definition}[theorem]{Definition}
\newtheorem{example}[theorem]{Example}
\theoremstyle{remark}
\newtheorem*{remark}{Remark}
\numberwithin{equation}{section}
\newcommand{\Hiep}{{\fontencoding{T5}\selectfont Ph\d{a}m Ho\`{a}ng Hi\d \ecircumflex{}p}}
\newcommand{\Nguyen}{{\fontencoding{T5}\selectfont Nguy\~\ecircumflex{}n}}
\newcommand{\Cau}{{\fontencoding{T5}\selectfont  136 Xu\^an Thu\'{y}, C\`\acircumflex{}u Gi\~ay, H\`{a} N\d\ocircumflex{}i\ }}
\newcommand{\CEP}[1]{\mbox{$\mathbb{C}^{#1}$}}
\newcommand{\C}{\mbox{$\mathbb{C}$}}
\newcommand{\B}{\mbox{$\mathbb{B}$}}
\newcommand{\E}{\mbox{$\mathcal{E}$}}
\newcommand{\Eo}{\mbox{$\mathcal{E}_{0}$}}
\newcommand{\F}{\mbox{$\mathcal{F}$}}
\newcommand{\PSH}[1]{\mbox{$\mathcal{PSH}(#1)$}}
\newcommand{\ddcn}[1]{\mbox{$\left(dd^{c}#1\right)^{n}$}}
\begin{document}

\author{Per \AA hag}
\address{Department of Mathematics and Mathematical Statistics\\ Ume\aa \ University\\ SE-901 87 Ume\aa \\ Sweden}
\email{Per.Ahag@math.umu.se}
\thanks{The first-named author was partially supported by the Lars Hierta Memorial Foundation.}
\author{Urban Cegrell}
\address{Department of Mathematics and Mathematical Statistics\\ Ume\aa \ University\\ SE-901 87 Ume\aa \\ Sweden}
\email{Urban.Cegrell@math.umu.se}
\author{\Hiep}
\address{Department of Mathematics\\
Hanoi National University of Education \\ \Cau \\
Vietnam} \email{phhiep\_vn@yahoo.com}

\title{Monge-Am\`ere measures on subvarieties}
\keywords{Complex Monge-Amp\`{e}re operator, Dirichlet problem,
pluripolar set, plurisubharmonic function.}
\subjclass[2010]{Primary 32W20; Secondary 32U15.}

\begin{abstract}
In this article we address the question whether
the complex Monge-Amp\`{e}re equation is solvable for measures with large
singular part. We prove that under some conditions there are no solution when the right-hand side
is carried by a smooth subvariety in $\CEP{n}$ of dimension $k<n$.
\end{abstract}

\maketitle

\begin{center}\Large\bf
\today
\end{center}

\section{Introduction}
 In this article we study the complex Monge-Amp\`{e}re equation
\begin{equation}\label{intro}
\ddcn{u}=\mu\,
\end{equation}
where $\mu$ is a given non-negative Radon measure and $\ddcn{\,\cdot\,}$ denotes the
complex Monge-Amp\`{e}re operator.  Monge-Amp\`{e}re techniques have an interesting history with applications ranging from algebraic and complex geometry to dynamics and theoretical physics (see e.g.~\cite{cegrell_hiep_etc,berman,demailly_hiep,Dinh_et al,Dinh_Nguyen,Donaldson}). For an historical account of the complex Monge-Amp\`{e}re operator we refer to~\cite{kiselman,krylov}.

\bigskip

  In the seminal article~\cite{bed_tay1}, by Bedford and Taylor it was proved that if $u$ is a continuous
  plurisubharmonic function defined on $\Omega\subset\CEP{n}$ , then the left-hand side $\ddcn{u}$ of the
 Monge-Amp\`{e}re equation can not charge on any subvariety in $\Omega$ of dimension $k<n$. On the other hand, they
 show that $\ddcn{u}$ can charge at a single point and that~(\ref{intro}) have (in this case) no unique solution. Several
 author have studied the case when $\mu$ is given by a single point mass or a finite sum of such (see e.g.~\cite{celik_poletsky,demailly_Z,klimek_green,lempert1,lempert2}). In~\cite{ahag_czyz_cegrell_hiep}, a measure $\mu$ was constructed that do not have any atoms and it is supported by a pluripolar set such that the equation~(\ref{intro}) have a solution with this given measure. Hence, there exists a measure $\mu$ with large singular part for which
 equation~(\ref{intro}) is solvable. The case when the measure $\mu$ vanishes on all pluripolar subsets of $\Omega$
 was completed in~\cite{cegrell_gdm} (see also~\cite{ahag_czyz_cegrell_hiep}).

  The growing use of complex Monge-Amp\`{e}re techniques in applications imply a growing demand on knowledge of~(\ref{intro}) with a large singular part of the given right-hand side (see e.g.~\cite{phong1,phong2}). Therefore, we address in this article the following question:

\bigskip

\noindent {\bf Aim:} \emph{Let $\Omega$ be a bounded hyperconvex domain in $\mathbb C^n$ and let $S$ be smooth subvariety in $\Omega$ of dimension $k<n$.  Assume that $\mu$ is a non-negative Radon measure (not identically zero) defined on $S$ with finite total mass. Do there exists a  plurisubharmonic function such that $(dd^c u)^n = \mu$? (with suitable interpretation of dimensions) }

\bigskip

 Now let $\Omega\subset\CEP{n}$ be a bounded hyperconvex domain, and let $\E(X)$ be the largest subset of non-positive
 plurisubharmonic functions $u$ defined on a complex manifold $X$ (e.g $X=\Omega$) for which $\ddcn{u}$ is a well-defined non-negative Radon measure. Furthermore, let $\F(\Omega)\subset\E(\Omega)$ be the subset with finite total mass and
 essential boundary values zero (see section~\ref{sec_prelm} for details). Our question in hand is of a purely local nature, and therefore we can without loss of generality assume that $S=\Delta^k\times\{0\}^{n-k}$,
 where $\Delta\subset\C$ is the unit disc. Furthermore, for our purpose it is sufficient to make the assumption that the total mass of $\mu$ in~(\ref{intro}) is finite.

 From Theorem~\ref{sufficient} it follows that if there exists a function  $\varphi\in\E(S)$ such that $(dd^c\varphi )^k ( \{\varphi >-\infty\} )=0$, then there exists a function $u\in\mathcal F(\Omega )$ such that
 \[
 (dd^c u)^n = \mu\times\delta_0^{n-k}
 \]
 with $\mu = (dd^c\varphi)^k$. Here $\delta_0$ denotes the dirac measure at the origin of $\Delta$. It should be emphasized that if $u\in\E(\Omega)$ and $u|_S$ is not identically $-\infty$, then by Theorem 5.11 in \cite{cegrell_gdm}, we have that $(dd^c u)^n (\{ S\backslash \{u=-\infty\} \})=0$, and therefore there exists a pluripolar Borel set $E$ in $S$ such that $(dd^cu)^n (S\backslash E) = 0$. Example \ref{example1} shows that this question is more involved. We construct a function $u\in\F (\Omega)$ such that $\ddcn{u}=\delta_{0}$, and $\overline{\{u=-\infty\}}=\Omega$. To show that the situation is even more intricate we construct in Example~\ref{example2} an example of a non-positive plurisubharmonic function $u$ with $u(z) > -\infty$ for all $z$,  but $\ddcn{u}$ is not a well-defined Radon measure.

  We end this article in section~\ref{toric} by proving the following: \emph{Assume that $\mu$ is a non-negative Radon measure defined on $\Delta^k$ with finite total mass such that it vanishes on all pluripolar sets in $\Delta^k$.
  Then there exists no function $u\in\E(\Delta^n)$ such that $u(z',z'') = u(z',|z_{k+1}|,...,|z_n|)$ and
  $(dd^c u)^n = \mu\times\delta_0^{n-k}$. Here we have that $z'=(z_1,...,z_k)$.}

  \bigskip

 For further information on pluripotential theory we refer to~\cite{ko1,ko2,ko3}

\section{Preliminaries}\label{sec_prelm}

Following the notation introduced by the second-named author in~\cite{cegrell_pc,cegrell_gdm} for a bounded hyperconvex domain $\Omega\subset\CEP{n}$ we define:
\[
\begin{aligned}
\Eo(\Omega)&=\bigg\{\varphi\in\PSH{\Omega}\cap L^\infty (\Omega): \lim_{z\to\partial\Omega}\varphi (z)=0, \int_\Omega \ddcn{\varphi}<\infty\bigg\},\\
\F(\Omega)&=\left\{\varphi\in\PSH{\Omega}: \exists \, \{u_j\}\subset \Eo(\Omega), \; \varphi_j\searrow \varphi, \, \sup_{j}\int_{\Omega} \ddcn{\varphi_{j}}<\infty\right\}\, ,\\
\E(\Omega)&=\big\{\varphi\in \PSH{\Omega}: \forall\, \omega\Subset \Omega \,\, \exists\, \varphi_{\omega}\in \F(\Omega) \text{ such that } \varphi_{\omega}=\varphi \text { on } \omega \big\}\, .
\end{aligned}
\]
We also need the following generalization to a complex manifold $X$:
\begin{multline*}
\E(X) = \Bigg\{u\in\PSH{X}:\ z\in X\text{ there exist a neighbourhood } W \text{ of } z\\ \text{ such that } u\in\E(W)\Bigg\}\, .
\end{multline*}

\bigskip

In the following example we show that there exists a function $u\in\F(\Omega)$ such that $\ddcn{u}=\delta_{0}$, and
$\overline{\{u=-\infty\}}=\Omega$.
\begin{example}\label{example1}  Let $\Omega\subset\CEP{n}$ be a hyperconvex domain in $\mathbb C^n$. This example shows
that there exists a function $u\in\mathcal F (\Omega)$ such that $\ddcn{u}=\delta_{0}$, and
$\overline{\{u=-\infty\}}=\Omega$.

\bigskip

 \emph{Step 1:} For $j\geq 1, 1\leq m\leq n$ let $\{a_{mj}\}_{j\geq 1}$, $a_{mj}>0$, be sequences of real numbers such that
\[
\sum_{j=1}^{\infty} \left( a_{1j}\cdots a_{nj} \right)^{\frac 1 n} < +\infty\,  ,
\]
and
\[
\sum_{j=1}^{\infty} \min ( a_{1j},\ldots,a_{m-1 j},a_{m+1 j},\ldots,a_{nj} )= \infty \qquad \text{ for all } 1\leq m\leq n\, .
\]
To simplify the notation let $A(j)=\min (a_{1j},\ldots,a_{m-1 j},a_{m+1 j},\ldots,a_{nj})$.
Set
\[
u(z)=\sum_{j=1}^{\infty}\max\left(a_{1j}\ln|z_1|,\ldots,a_{nj}\ln|z_n|\right)\, .
\]
Then we have that $u\in\F(\Delta^n)$, and $(dd^c u)^n=c\delta_{0}$ for some
\[
c\in \left[ \sum_{j=1}^{\infty} a_{1j}\cdots a_{nj}\, ,\,  \left(\left(\sum_{j=1}^{\infty} ( a_{1j}\cdots a_{nj}
\right)^{\frac 1 n }\right)^n \right]\, .
\]
Furthermore, we have that
\begin{multline*}
u(z_1,\ldots,z_{m-1},0,z_{m+1},\ldots,z_n)\\
\leq \sum_{j=1}^{\infty} A(j)\max ( \log |z_1|,\ldots,\log |z_{m-1}|,\log |z_{m+1}|,\ldots,\log |z_n| ) =-\infty\, .
\end{multline*}
Hence,
\[
\{u=-\infty\}=\{0\}\times\Delta^{n-1}\cup \cdots \cup \Delta^{n-1}\times \{0\}\, .
\]

\bigskip

 \emph{Step 2:} We can assume that the unit ball $\B$ is contained in $\Omega$. Let $\{S_j\}$ be a family of hyperplanes such that $\overline{\bigcup_{j=1}^{\infty} (S_j\cap\B)}=\overline\B$. By using \emph{step 1} together with changing coordinates we can choose $\varphi_j\in\F(\B)$ such that
\[
\ddcn{\varphi_j}=\frac{1}{2^j}\,\delta_0\qquad\text{ and }\qquad \varphi_j|_{S_j\cap\mathbb{B}}=-\infty\, .
\]
Set
$$\psi=\sum_{j=1}^{\infty}\varphi_j.$$
Then $\psi\in\mathcal F (\B)$,
$\psi|_{S_j\cap\mathbb{B}}=-\infty$ for all $j$, and $\ddcn{\psi}\geq \delta_{0}$. Set
\[
\psi^r=\sup\{\Phi\in\PSH{\B}: \Phi\leq 0 \text{ and } \Phi\leq \psi \text{ on } \B (0,r)\}\, .
\]
Here $\B (0,r)\subset\CEP{n}$ is the ball with radius $r$. This construction yields that $\{\psi^r\}$ increases
pointwise to a function $\varphi\in\F (\B)$ and $\ddcn{\varphi}=c\delta_{0}$, $c>0$. From the fact that $\psi^r\leq \varphi_j$ on $\B (0,r)$ and $(dd^c\varphi )^n=0$ on $\B\backslash \{ 0 \}$, we get that $\varphi\leq\varphi_j$ on $\B$ for all $j\geq 1$, which yields that
$\varphi|_{S_j\cap\mathbb{B}}=-\infty$ for all $j\geq 1$. Finally, set
\[
u=\sup\{ v\in\PSH{\Omega }: v\leq 0 \text{ and } v\leq \varphi \text{ on } \B \}\, .
\]
By Lemma~4.5 in \cite{hiep_sub}, Theorem~2.2 in \cite{cegrell_Lisa} and Lemma 4.1 in \cite{ahag_czyz_cegrell_hiep}, we get $u\in\mathcal F(\Omega)$, $(dd^c u)^n = c\delta_{0}$ and $\overline{\{u=-\infty\}}=\Omega$.

\end{example}

\bigskip

\begin{proposition}\label{case2_lem1} Assume that $\Omega\subseteq\CEP{n}$ is a bounded hyperconvex domain.  Let $u\in\F (\Omega)$ and $v\in\PSH{\Omega}$, $v\leq 0$, and $w\in \E(\Omega)$ be such that $\ddcn{w}$ vanishes on pluripolar sets. If $\ddcn{u} ( \{u>-\infty\} )=0$ and $u\geq v + w$ on a neighborhood $D$ of  $\{u=-\infty\}$ then $u\geq v$ on $\Omega$.
\end{proposition}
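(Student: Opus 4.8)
The plan is to show that the set $\{u<v\}$ is pluripolar; this suffices, because if two plurisubharmonic functions satisfy $u\ge v$ outside a pluripolar (hence Lebesgue-null) set, then the sub-mean value inequality for $v$ together with the upper semicontinuity of $u$ forces $u\ge v$ everywhere. Throughout I would only form Monge--Amp\`ere measures of functions that genuinely belong to $\F(\Omega)$, since $v$ is merely assumed to lie in $\PSH{\Omega}$ and $\ddcn{v}$ need not be defined.

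First I record that $g:=\max(u,v)$ lies in $\F(\Omega)$: one has $u\le g\le 0$ with $u\in\F(\Omega)$, and $\F(\Omega)$ is stable under taking the maximum with a nonpositive plurisubharmonic function dominating an element of $\F(\Omega)$. Since $\{g>u\}=\{u<v\}$, the comparison principle applied to the pair $u,g\in\F(\Omega)$ gives
\[
\int_{\{u<v\}}\ddcn{g}\ \le\ \int_{\{u<v\}}\ddcn{u}.
\]
By hypothesis $\ddcn{u}$ is carried by $\{u=-\infty\}$, so the right-hand side equals the singular mass $\ddcn{u}\bigl(\{u=-\infty\}\cap\{v>-\infty\}\bigr)$. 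The whole problem is thus reduced to showing that this singular mass vanishes and then converting the resulting information into the pluripolarity of $\{u<v\}$.

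This is where the hypotheses on $w$ enter, and it is the step I expect to be the main obstacle. On the pluripolar set $\{u=-\infty\}\subset D$ the inequality $u\ge v+w$ forces $v+w=-\infty$, hence $\{u=-\infty\}\cap\{v>-\infty\}\subset\{w=-\infty\}$, which is again pluripolar. The delicate point is that the Monge--Amp\`ere measure with good behaviour is $\ddcn{w}$, not $\ddcn{u}$: one must feed the inequality $u\ge v+w$ near $\{u=-\infty\}$ into a domination argument built on the fact that $\ddcn{w}$ puts no mass on pluripolar sets, in order to conclude that $\ddcn{u}$ cannot concentrate on $\{u=-\infty\}\cap\{v>-\infty\}$. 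A convenient preliminary reduction, which I would carry out first, is to replace $v$ by its truncations $\max(v,-k)$ so that every comparison and domination statement is applied to locally bounded data, and then let $k\to\infty$. Once the singular mass is shown to vanish, the passage from ``$\ddcn{g}$ gives no mass to $\{u<v\}$'' to ``$\{u<v\}$ is pluripolar'' would be obtained from a strong, Xing-type comparison inequality for functions in $\F(\Omega)$, which bounds a suitable capacity of $\{u<v\}$ in terms of that Monge--Amp\`ere mass; combined with the first paragraph this yields $u\ge v$ on $\Omega$.
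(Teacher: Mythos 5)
You have built a correct outer frame, but the proof has a genuine gap at exactly the point you yourself flag as ``the main obstacle'', and nothing in your proposal closes it. The heuristic you offer there does not work: from $u\geq v+w$ on $D$ you correctly get the inclusion $\{u=-\infty\}\cap\{v>-\infty\}\subset\{w=-\infty\}$, but pluripolarity of this set tells you nothing about its $\ddcn{u}$-mass, because $\ddcn{u}$ is by hypothesis carried entirely by the pluripolar set $\{u=-\infty\}$; likewise, the assumption that $\ddcn{w}$ vanishes on pluripolar sets constrains the measure $\ddcn{w}$, not the measure $\ddcn{u}$, and there is no general mechanism that transfers ``$\ddcn{w}$ charges no pluripolar set'' into ``$\ddcn{u}$ does not charge $\{w=-\infty\}$''. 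Your suggested truncation $\max(v,-k)$ does not help either, since the difficulty is not the unboundedness of $v$ but the comparison of singular parts of Monge--Amp\`ere measures. What is actually needed is the following nontrivial statement: setting $g=\max(u,v)$, the hypothesis gives the sandwich $g+w\leq u\leq g$ near $\{u=-\infty\}$ (because $u+w\leq u$ and $v+w\leq u$ on $D$), and from this one must deduce
\[
\ddcn{g}\ \geq\ \chi_{\{u=-\infty\}}\ddcn{u}\ =\ \ddcn{u}\, ,
\]
a comparison of pluripolar parts which is precisely Lemma~4.1 of \cite{ahag_czyz_cegrell_hiep}, proved there by approximation and quasicontinuity arguments. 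This inequality (or its refined form with $\chi_{\{g=-\infty\}}$ on the left, which yields your claim $\ddcn{u}\bigl(\{u=-\infty\}\cap\{v>-\infty\}\bigr)=0$) is the entire mathematical content of the proposition, and your proposal contains no argument for it.

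For what it is worth, the surrounding architecture of your proposal runs parallel to the paper's own proof: the paper also works with $\max(u,v)$, uses the hypothesis only through the sandwich above, cites Lemma~4.1 of \cite{ahag_czyz_cegrell_hiep} for the displayed inequality, and then concludes $u=\max(u,v)\geq v$ by the domination principle (Proposition~3.4 of \cite{Khue_Hiep}). Your endgame --- the comparison principle for $u,g\in\F(\Omega)$ followed by a Xing-type inequality to show $\{u<v\}$ is negligible, and then the sub-mean-value argument --- is a workable alternative to that last citation. But as written, your text is a correct reduction of the proposition to its hardest step, not a proof of it.
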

\begin{proof} We have that
\[
\max(u,v)+ w=\max(u+w,v+w)\leq u\, ,
\]
and therefore by Lemma~4.1 in~\cite{ahag_czyz_cegrell_hiep} we get that
\[
\ddcn{\max(u,v)}\geq \chi_{\{u=-\infty\}}\ddcn{u}=\ddcn{u}\, .
\]
Therefore, by Proposition~3.4 in~\cite{Khue_Hiep} implies that
$u=\max(u,v)\geq v$ on $\Omega$.
\end{proof}

\section{A sufficient condition on $\mu$}

\begin{lemma}\label{product} Assume that $\Omega_1\subset C^{n_1}$ and $\Omega_2\subset C^{n_2}$ are bounded hyperconvex domains. Let $u_1\in\E(\Omega_1)$, $u_2\in\E(\Omega_2)$ be such that
\[
\ddcn{u_1} ( \{u_1>-\infty\} )=\ddcn{u_2} ( \{u_2>-\infty\} )=0\, .
\]
Then
\begin{equation}\label{product1}
(dd^c\max (u_1,u_2))^{n_1+n_2} = (dd^c u_1)^{n_1}\wedge (dd^c u_2)^{n_2}\, .
\end{equation}
\end{lemma}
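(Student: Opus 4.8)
The plan is to work in the product $\Omega_1\times\Omega_2\subset\CEP{n_1+n_2}$, where $u_1$ and $u_2$ are viewed as plurisubharmonic functions of the two separate blocks of variables. The decisive structural fact is that each $u_i$ is constant in the complementary variables, so the current $dd^c u_i$ on $\Omega_1\times\Omega_2$ is a pullback from an $n_i$-dimensional factor and therefore has rank at most $n_i$; consequently $(dd^c u_i)^{n_i+1}=0$, and in particular $(dd^c u_1)^{n_1+n_2}=(dd^c u_2)^{n_1+n_2}=0$. The same observation identifies the right-hand side of \eqref{product1}: $(dd^c u_1)^{n_1}$ is the pullback of the measure $(dd^c u_1)^{n_1}$ on $\Omega_1$, and likewise for $u_2$, so their wedge is the genuine product measure, carried by $P_1\times P_2$ with $P_i:=\{u_i=-\infty\}$. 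Since the statement and the operator are local and every function of \E\ agrees locally with a function of \F, I would first reduce to $u_1\in\F(\Omega_1)$ and $u_2\in\F(\Omega_2)$.

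First I would localize the left-hand side to the coincidence set. By plurifine locality of the Monge--Amp\`ere operator, on the plurifine-open set $\{u_1>u_2\}$ we have $\max(u_1,u_2)=u_1$, hence $(dd^c\max(u_1,u_2))^{n_1+n_2}=(dd^c u_1)^{n_1+n_2}=0$ there, and symmetrically on $\{u_1<u_2\}$. Thus $(dd^c\max(u_1,u_2))^{n_1+n_2}$ is carried by $\{u_1=u_2\}=(P_1\times P_2)\cup R$, with $R:=\{u_1=u_2>-\infty\}$ the finite coincidence set. The proof then splits into showing that no mass sits on $R$ and that the mass on $P_1\times P_2$ is exactly the product.

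The vanishing on $R$ is where the hypothesis is used. Wherever $u_1$ is finite and locally bounded, the only admissible location for mass of $(dd^c u_1)^{n_1}$ would be $\{u_1>-\infty\}$, which the hypothesis forbids, so $u_1$ is locally maximal there, and likewise $u_2$; for bounded maximal functions of separate variables $\max(u_1,u_2)$ is again maximal (one checks, as in the model $\max(\REA\,z,\REA\,w)$, that the contact current factors through a single complex combination and so has vanishing top power), which kills the locally bounded part of $R$. The remaining points of $R$, where some $u_i$ is finite but not locally bounded, I would absorb by a truncation argument together with the fact that the nonpluripolar part of each $(dd^c u_i)^{n_i}$ vanishes. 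On $P_1\times P_2$ I would then identify the measure by approximating $u_i$ from above by $u_i^k=\max(u_i,-k)\in\Eo(\Omega_i)$, so that $\max(u_1^k,u_2^k)=\max(\max(u_1,u_2),-k)$ decreases to $\max(u_1,u_2)$, and passing to the limit by continuity of the operator along decreasing sequences with uniformly bounded masses (available since $u_i\in\F$).

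The main obstacle is precisely this last limit, and with it the bookkeeping of the coefficient. For bounded functions the identity is false: the contact measure of $\max(u_1^k,u_2^k)$ is spread over a real hypersurface and does not equal the product, so the whole force of the argument lies in showing that as $k\to\infty$ the redistributed contact mass migrates onto $P_1\times P_2$ and reassembles into $(dd^c u_1)^{n_1}\otimes(dd^c u_2)^{n_2}$ with coefficient $1$ rather than the binomial $\binom{n_1+n_2}{n_1}$ that occurs for the sum $u_1+u_2$; heuristically this is because $\max$ activates only one block of variables at a time. A clean way to secure the exact coefficient is to verify \eqref{product1} directly for the explicit model functions built from $\max_i(a_{ij}\ln|z_i|)$ in the style of Example~\ref{example1}, where both sides reduce to an atomic computation, and then to transport the identity to general $u_i$ through the decreasing approximation above. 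As an alternative route to equality I would try to prove the two inequalities separately, obtaining $\ge$ from the maximum inequality of Lemma~4.1 in \cite{ahag_czyz_cegrell_hiep} and then forcing equality by matching total masses via the comparison principle, Proposition~3.4 in \cite{Khue_Hiep}.
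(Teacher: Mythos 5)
Your overall geometry is right --- both sides of \eqref{product1} live on $\{u_1=u_2\}$, whose essential part is $\{u_1=-\infty\}\times\{u_2=-\infty\}$ --- but the argument has a genuine gap exactly where you yourself locate ``the main obstacle'', and that obstacle rests on a misreading of the situation. The paper's proof is three lines: truncate, $u_i^j=\max(u_i,-j)$; quote the product formula
\[
(dd^c\max (u_1^j,u_2^j))^{n_1+n_2} = (dd^c u_1^j)^{n_1}\wedge (dd^c u_2^j)^{n_2}
\]
for these bounded functions from B{\l}ocki's theorem on equilibrium measures of product sets \cite{blocki} (see also \cite{ahag_cegrell_hiep}); let $j\to\infty$. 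The point you miss is that the hypothesis $(dd^c u_i)^{n_i}(\{u_i>-\infty\})=0$ is inherited by the truncations in a decisive form: by plurifine locality, on $\{u_i>-j\}$ one has $(dd^c u_i^j)^{n_i}=(dd^c u_i)^{n_i}=0$, so $(dd^c u_i^j)^{n_i}$ is carried by the minimum level set $\{u_i^j=-j\}=\{u_i\le -j\}$, and both functions are truncated at the \emph{same} level. For bounded functions of separate variables whose Monge--Amp\`ere measures sit on their common minimum level set --- precisely the situation of relative extremal functions --- the product identity holds exactly, with constant $1$; that is B{\l}ocki's result. So your assertion that ``for bounded functions the identity is false: the contact measure of $\max(u_1^k,u_2^k)$ is spread over a real hypersurface'' is wrong for these particular truncations; it fails only for general bounded functions (e.g.\ $|z'|^2$ and $|z''|^2$), which is irrelevant here. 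Once the identity is known for the truncations, the limit is routine: $\max(u_1^j,u_2^j)=\max(\max(u_1,u_2),-j)$ decreases to $\max(u_1,u_2)$, and product measures converge weakly when their factors do.

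The second gap is your treatment of the finite coincidence set $R$: it rests on the claim that the maximum of two maximal functions in separate variables is maximal, justified by the model $\max(\mathrm{Re}\, z,\mathrm{Re}\, w)$ ``factoring through a single complex combination''. That factoring device works only for pluriharmonic functions (locally real parts of holomorphic functions); for genuinely maximal functions with $n_i\ge 2$ there is no such factorization, and the claim --- though true --- is essentially equivalent to B{\l}ocki's theorem, i.e.\ to the very fact you need and never prove. The proposed repairs do not close this: verifying \eqref{product1} on the explicit model functions of Example~\ref{example1} and ``transporting'' it by decreasing approximation is not a proof, since passing an identity through a decreasing limit requires the identity for the approximants (which is what is missing), not for unrelated models; and the alternative route via Lemma~4.1 of \cite{ahag_czyz_cegrell_hiep} plus mass matching is only sketched --- that lemma gives a lower bound by $\chi_{\{u=-\infty\}}(dd^c u)^n$, not the mixed product lower bound you would need, and the equality of total masses is itself not established. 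In short: replace the heuristics on $R$ and on $P_1\times P_2$ by the single citation the paper uses, applied to the truncations, and your outline collapses to the paper's proof.
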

\begin{proof}
Set $u_1^j = \max (u_1,-j)$ and $u_2^j = \max (u_2,-j)$. From~\cite{blocki} (see also~\cite{ahag_cegrell_hiep}),
we have that
\[
(dd^c\max (u_1^j,u_2^j))^{n_1+n_2} = (dd^c u_1^j)^{n_1}\wedge (dd^c u_2^j)^{n_2}\, .
\]
By letting $j\to\infty$, we obtain that~(\ref{product1}).
\end{proof}
\begin{lemma}\label{productgreen} Let $\varphi\in\mathcal E(\Delta^k)$ be such that $(dd^c\varphi )^k ( \{\varphi>-\infty\} )=0$. Then
\[
(dd^c \max (\varphi(z_1,...,z_k),\log |z_{k+1}|, ...,|z_n|))^n = (dd^c\varphi)^k\times \delta_0^{n-k}\, .
\]
\end{lemma}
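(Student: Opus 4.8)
The plan is to view the left-hand side as the Monge-Amp\`ere measure of a maximum of two plurisubharmonic functions depending on complementary groups of variables, and then to read off the answer from Lemma~\ref{product}. Split the coordinates of $\Delta^n=\Delta^k\times\Delta^{n-k}$ as $z'=(z_1,\dots,z_k)$ and $z''=(z_{k+1},\dots,z_n)$, and put
\[
g(z'')=\max\bigl(\log|z_{k+1}|,\dots,\log|z_n|\bigr),
\]
so that the function in the statement is precisely $\max(\varphi,g)$ on $\Delta^k\times\Delta^{n-k}$, with $\varphi$ regarded as a function of $z'$ alone and $g$ of $z''$ alone. I would then apply Lemma~\ref{product} with $u_1=\varphi$, $n_1=k$ and $u_2=g$, $n_2=n-k$; the whole proof reduces to checking the two hypotheses of that lemma for this choice.

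For $u_1=\varphi$ the hypothesis $(dd^c\varphi)^k(\{\varphi>-\infty\})=0$ is exactly what is assumed. For $u_2=g$ I would first note that $g$ is the pluricomplex Green function of the polydisc $\Delta^{n-k}$ with pole at the origin, so $g\in\mathcal F(\Delta^{n-k})\subset\mathcal E(\Delta^{n-k})$. The point that needs to be pinned down is the identity
\[
(dd^c g)^{n-k}=\delta_0^{n-k},
\]
the Dirac mass at the origin of $\Delta^{n-k}$; this is the standard single-pole polydisc computation, and it matches the all-ones single-term case of \emph{Step 1} of Example~\ref{example1}, where the constant $c$ is squeezed between $\sum a_1\cdots a_{n-k}$ and $\bigl(\sum(a_1\cdots a_{n-k})^{1/(n-k)}\bigr)^{n-k}$, both equal to $1$ here. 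Since this measure lives on $\{0\}=\{g=-\infty\}$, it follows that $(dd^c g)^{n-k}(\{g>-\infty\})=0$, which is the second hypothesis of Lemma~\ref{product}.

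With both hypotheses in place, Lemma~\ref{product} gives
\[
(dd^c\max(\varphi,g))^n=(dd^c\varphi)^k\wedge(dd^c g)^{n-k},
\]
and since the two factors are pulled back from the complementary coordinate subspaces $\Delta^k$ and $\Delta^{n-k}$, this wedge product is the product measure $(dd^c\varphi)^k\times(dd^c g)^{n-k}=(dd^c\varphi)^k\times\delta_0^{n-k}$, which is the desired equality. I expect the only genuinely delicate step to be the verification of the two structural hypotheses for $g$ --- above all the identification of $(dd^c g)^{n-k}$ with $\delta_0^{n-k}$ --- since once these are secured the conclusion is immediate from Lemma~\ref{product}; in other words, the real content of the statement has already been isolated in that lemma, and this proof is essentially a bookkeeping of variables.
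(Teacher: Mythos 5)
Your proof is correct and follows essentially the same route as the paper: the paper's entire proof is the single line ``It follows from Lemma~\ref{product}'', applied exactly as you do with $u_1=\varphi$ on $\Delta^k$ and $u_2=\max(\log|z_{k+1}|,\dots,\log|z_n|)$ on $\Delta^{n-k}$. Your write-up simply makes explicit the details the paper leaves implicit --- the verification that $(dd^c u_2)^{n-k}=\delta_0$ lives on $\{u_2=-\infty\}$ and the identification of the wedge product of measures in complementary variables with the product measure --- all of which are correct.
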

\begin{proof}
It follows from Lemma \ref{product}.
\end{proof}

From Lemma~\ref{productgreen} we have that

\begin{theorem}\label{sufficient}  Let $\Omega$ be a bounded hyperconvex domain in $\mathbb C^n$ and $S$ be a subvariety in $\Omega$ with dimension $k<n$. Assume that $\varphi\in\mathcal E(S)$ such that
\[
(dd^c\varphi )^k ( \{\varphi >-\infty\} )=0\, .
\]
Then exists a function $u\in\mathcal E(\Omega )$ such that $(dd^c u)^n = (dd^c\varphi)^k$.
\end{theorem}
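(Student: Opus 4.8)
The plan is to obtain the theorem as an immediate consequence of Lemma~\ref{productgreen}, once the data has been normalized. As stressed in the introduction, the equation and the class $\E$ are of a purely local nature, so it is enough to construct $u$ near $S$. Since $S$ is a smooth $k$-dimensional subvariety, each of its points carries a holomorphic chart $\Psi\colon U\to\Delta^n$ with $\Psi(S\cap U)=\Delta^k\times\{0\}^{n-k}$, and because the complex Monge-Amp\`ere operator and the class $\E$ are biholomorphic invariants the hypotheses on $\varphi$ pass unchanged through $\Psi$. I would therefore reduce to the model situation $S=\Delta^k\times\{0\}^{n-k}\subset\Delta^n$, $\varphi\in\E(\Delta^k)$ with $\ddck{\varphi}{k}(\{\varphi>-\infty\})=0$, writing $z'=(z_1,\dots,z_k)$.

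In this normalized setting I would simply exhibit the solution
\[
u(z)=\max\bigl(\varphi(z_1,\dots,z_k),\,\log|z_{k+1}|,\dots,\log|z_n|\bigr)\, ,
\]
a non-positive plurisubharmonic function on $\Delta^n$. Lemma~\ref{productgreen} then does all the work: that its left-hand side is a well-defined Monge-Amp\`ere measure already encodes $u\in\E(\Delta^n)$ (which rests on Lemma~\ref{product}), and the lemma evaluates
\[
\ddcn{u}=\ddck{\varphi}{k}\times\delta_0^{n-k}\, .
\]
Identifying $\Delta^k\times\{0\}^{n-k}$ with $\Delta^k$, the product on the right is precisely $\ddck{\varphi}{k}$ regarded as a measure carried by $S$, which is the asserted equation $\ddcn{u}=\ddck{\varphi}{k}$.

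The only point I expect to need real care is the transition from this local model back to the global statement for an arbitrary $S$. The function $u$ is built out of normal coordinates $z_{k+1},\dots,z_n$ to $S$, which are chart-dependent, so the local pieces need not coincide on overlaps; what is intrinsic is their Monge-Amp\`ere measure, equal to $\ddck{\varphi}{k}$ on $S$ and to $0$ off $S$. To produce a single $u\in\E(\Omega)$ I would glue the local constructions by passing, away from $S$, to the maximal (envelope) extension in the spirit of \emph{Step~2} of Example~\ref{example1}, so that $\ddcn{u}=0$ there while the local formula together with Lemma~\ref{productgreen} fixes the mass along $S$. Checking that the resulting global function still lies in $\E(\Omega)$ and carries exactly the prescribed measure is the genuine technical content; in the model case treated above it is immediate.
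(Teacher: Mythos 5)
Your proposal is essentially the paper's own proof: the paper derives Theorem~\ref{sufficient} directly from Lemma~\ref{productgreen} via the very same construction $u=\max\bigl(\varphi(z'),\log|z_{k+1}|,\dots,\log|z_n|\bigr)$ in the normalized coordinates $S=\Delta^k\times\{0\}^{n-k}$, invoking the locality remark made in the introduction. The globalization/gluing issue you flag at the end is a fair concern, but the paper does not address it either --- its entire proof is the one-line citation of Lemma~\ref{productgreen}.
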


\section{A necessary condition to belong to $\mathcal E(\Omega)$}\label{sec_necessary}

In this section we start with introducing some notation.
For $z=(z_1,...,z_n)\in\mathbb C^n$, we write $z'=(z_1,...,z_k)$ and $z"=(z_{k+1},...,z_n)$. Then we define
\begin{align*}
\|z\| & =\max(|z_1|,\ldots,|z_n|)\, ,\\
\|z'\| &=\max(|z_1|,\ldots,|z_k|)\, ,\\
\|z''\| &=\max(|z_{k+1}|,\ldots,|z_n|), .
\end{align*}

With these notation we make the following definition

\begin{definition}\label{new function} Let $u\in\PSH{\Delta^n}, u\leq 0$. We define
\[
\phi_u(z',r) = \frac { \max_{\|z''\|=r} u(z',z'') } { |\log r| }\, ,
\]
and
\[
\phi_u(z')=\left(-\nu_u(z',\cdot) (0)\right)^*\, ,
\]
where $\nu_u(z',\cdot) (0)$ is the Lelong number of the function $u(z',\cdot)$ at $0$.
\end{definition}

From the construction in Definition~\ref{new function} we get that $\phi_u(\cdot,r)\in\PSH{\Delta^k}$,
$\phi_u(\cdot,r)\leq 0$ and that $\phi_u (z',r)\nearrow -\nu_u(z',.) (0)$ as $r\searrow 0$. Thanks to~\cite{bed_tay2}, we have that $\phi_u\in\PSH{\Delta^k}$, $\phi_u\leq 0$, and that the set
\[
\left\{ z'\in\Delta^k:\ \phi_u(z') \not = -\nu_u(z',.) (0) \right\}
\]
is a pluripolar set in $\Delta^k$. Furthermore, we get that

\begin{itemize}\itemsep2mm

\item if $u\geq v$, then $\phi_u\geq\phi_v$

\item $\phi_{au+bv} = a\phi_u+b\phi_v$, for all $u,v\in\PSH{\Delta^n}$, $u,v\leq 0$, and $a,b\geq 0$

\item $\phi_{ \max (u,v) }= \max (\phi_u,\phi_v)$\, .

\end{itemize}

\bigskip

\begin{theorem}\label{constantheorem} Let $u\in\PSH{\Delta^n}$, $u\leq 0$. Then we have that $\phi_u$ is a constant function.
\end{theorem}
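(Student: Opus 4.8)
The plan is to exploit the fact, already recorded just before the statement, that $\phi_u\in\PSH{\Delta^k}$ with $\phi_u\le 0$, and to reduce ``constant'' to a statement about the value of $\phi_u$ away from a small set. Put $M:=\sup_{\Delta^k}\phi_u\in[-\infty,0]$; since $\Delta^k$ is connected it suffices to prove that $\phi_u\equiv M$. I would first isolate the following elementary reduction: because $\phi_u$ is upper semicontinuous, the set $\{z'\in\Delta^k:\phi_u(z')<M\}$ is \emph{open} (at a point where $\phi_u(z')<M$, upper semicontinuity with $\varepsilon=M-\phi_u(z')$ gives a whole neighbourhood on which $\phi_u<M$). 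An open subset of $\Delta^k$ that has Lebesgue measure zero must be empty. Hence the theorem follows as soon as one shows that $\phi_u=M$ outside a Lebesgue--null set.

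The next step links this to the slice Lelong numbers. By Definition~\ref{new function} we have $\phi_u=(-\nu_u(\cdot,\cdot)(0))^*\ge -\nu_u(\cdot,\cdot)(0)$, and, as noted after the definition, the set $\{z':\phi_u(z')\neq -\nu_u(z',\cdot)(0)\}$ is pluripolar and hence Lebesgue--null. Consequently $M=\sup_{\Delta^k}\phi_u=-\nu_{\min}$, where $\nu_{\min}:=\inf_{z'\in\Delta^k}\nu_u(z',\cdot)(0)\ge 0$, and it remains only to prove that
\[
\nu_u(z',\cdot)(0)=\nu_{\min}\qquad\text{outside a pluripolar subset of }\Delta^k .
\]
For this I would invoke the Siu--type semicontinuity of the slice Lelong number, which lies in the same Bedford--Taylor circle of ideas~\cite{bed_tay2} already used above to produce the plurisubharmonicity of $\phi_u$ and the pluripolarity of its exceptional set: for every $c>\nu_{\min}$ the super-level set $\{z':\nu_u(z',\cdot)(0)\ge c\}$ is a \emph{proper} analytic subset of $\Delta^k$, and every proper analytic subset of $\Delta^k$ is pluripolar. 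Taking the countable union over rational $c>\nu_{\min}$ exhibits a pluripolar set off which $\nu_u(z',\cdot)(0)=\nu_{\min}$.

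Combining the two steps finishes the argument: outside a pluripolar (hence null) set one has $\phi_u(z')=-\nu_u(z',\cdot)(0)=-\nu_{\min}=M$, so $\{\phi_u<M\}$ is simultaneously open and null, therefore empty, and since $M=\sup\phi_u$ also forces $\{\phi_u>M\}=\varnothing$, we conclude $\phi_u\equiv M$. The main obstacle, and the part deserving the most care, is the middle step: one must control the Lelong numbers of the \emph{family of slices} $z''\mapsto u(z',z'')$ of a single plurisubharmonic function as $z'$ varies, rather than the point Lelong numbers of one fixed current. I would need to check carefully that slicing is compatible with the analyticity of Lelong super-level sets and that these proper super-level sets are genuinely pluripolar in $\Delta^k$; the monotonicity and additivity properties of $z'\mapsto\phi_u(z')$ listed above, together with $\phi_u\ge -\nu_u$, are the natural tools for making this compatibility precise.
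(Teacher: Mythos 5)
Your overall strategy (reduce to showing $\phi_u=M$ off a Lebesgue-null set, then use upper semicontinuity to conclude that the open set $\{\phi_u<M\}$ is empty) is sound, but the middle step---the one you yourself flag as needing the most care---is precisely where the proof breaks, and it cannot be repaired in the form you state it. Siu's theorem gives analyticity of the superlevel sets $\{z\in\Delta^n:\ \nu_u(z)\geq c\}$ of the \emph{ambient} Lelong number of $u$ as a function on $\Delta^n$; it says nothing about the sets $\{z'\in\Delta^k:\ \nu_{u(z',\cdot)}(0)\geq c\}$ defined by the Lelong numbers of the \emph{slices}, and these are in general not analytic. Concretely, take $n=2$, $k=1$, let $\{a_j\}$ be a dense sequence in $\Delta$, let $g(z_1)=\sum_{j\geq 1}2^{-j}\log\left(|z_1-a_j|/2\right)$, and put $u(z_1,z_2)=\max\left(g(z_1),\log|z_2|\right)$. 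Then $u\in\PSH{\Delta^2}$, $u\leq 0$, and $\nu_{u(z_1,\cdot)}(0)$ equals $1$ when $g(z_1)=-\infty$ and $0$ otherwise; so $\nu_{\min}=0$, and for every $c\in(0,1]$ the set $\{z_1:\ \nu_{u(z_1,\cdot)}(0)\geq c\}=\{g=-\infty\}$ is a dense polar set, which is not an analytic subset of $\Delta$ (an analytic subset of $\Delta$ is either all of $\Delta$ or discrete). Note also that what you actually need from this step---pluripolarity of these superlevel sets for all $c>\nu_{\min}$---is, given your first reduction, essentially equivalent to the theorem itself, so deriving it from properties of $\phi_u$ would be circular.

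The paper's proof shows how to invoke Siu legitimately: its whole point is to convert slice information into ambient Lelong-number information \emph{before} appealing to analyticity. Fix $z_0'$ and $\epsilon>0$; upper semicontinuity of $\phi_u$ gives $\nu_{u(z',\cdot)}(0)\geq-\phi_u(z_0')-\epsilon$ for all $z'$ in a neighborhood of $z_0'$, and since $u\leq 0$ this upgrades to the estimate $u(z',z'')\leq(-\phi_u(z_0')-\epsilon)\log\|z''\|$ on that neighborhood times $\Delta^{n-k}$. Such an estimate bounds the ambient Lelong number of $u$ from below at every point of an \emph{open} piece of the plane $\Delta^k\times\{0\}^{n-k}$. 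Siu's theorem (applied in $\Delta^n$) makes the ambient superlevel set analytic; since it contains an open subset of the irreducible variety $\Delta^k\times\{0\}^{n-k}$, it contains the whole plane, and restricting back to slices returns the estimate $u\leq(-\phi_u(z_0')-\epsilon)\log\|z''\|$ on all of $\Delta^n$, whence $\phi_u\leq\phi_u(z_0')+\epsilon$ everywhere. The example above makes the distinction visible: the ambient Lelong number of $u$ at $(a,0)$ is $\min\left(\nu_g(a),1\right)$, which vanishes outside the countable set $\{a_j\}$, while the slice Lelong number equals $1$ on all of the dense set $\{g=-\infty\}$; only the former has analytic superlevel sets, and it is exactly this gap between slice and ambient Lelong numbers that your argument does not bridge.
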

\begin{proof}
Take $z_0'\in\Delta^k$. We will only need to prove that
\[
\phi_u (z') \leq \phi_u(z_0') \qquad \text{ for all } \; z'\in\Delta^k\, .
\]
Fix $\epsilon >0$. We can choose $r>0$ small enough such that
\[
\phi_u(z')\leq \phi_u(z_0')+\epsilon \qquad \text{ for all } \; \|z'-z_0'\|<r\, .
\]
This implies that
\[
\nu_u(z',\cdot) (0)\geq -\phi_u(z_0')-\epsilon \qquad \text{ for all } \;\|z'-z_0'\|<r\, .
\]
Therefore, we have that
\[
u(z',z'')\leq (-\phi_u(z_0')-\epsilon)\log \|z''\| \qquad \text{ for } \|z'-z_0'\|<r,\; z''\in\Delta^{n-k}\, .
\]
Hence,
\[
 \{z'\in\Delta^k:\ |z'-z_0'|<r\}\times\{0\}^{n-k}\subset \{ z\in\Delta^n:\ \nu_u (z)\geq -\phi_u(z_0')-\epsilon \}\, .
\]
On the other hand, from Siu's theorem (see e.g.\cite{Siu74,De87}) we have that
\[
\{z\in\Delta^n:\ \nu_u (z)\geq -\phi_u(z_0')-\epsilon\}
\]
is an analytic set, which implies that
\[
\{z\in\Delta^n:\ \nu_u (z)\geq -\phi_u(z_0')-\epsilon\}=\Delta^k\times\{0\}^{n-k}\, .
\]
Thus,
\[
u(z',z'')\leq (-\phi_u(z_0')-\epsilon)\log \|z''\| \qquad \text{ for all } z\in\Delta^n\, .
\]
Hence, $\phi_u (z') \leq \phi_u(z_0')+\epsilon$ for all $z'\in\Delta^k$. Let now $\epsilon \to 0^+$, and we finally
get that
\[
\phi_u (z') \leq \phi_u(z_0')\qquad \text{ for all } z'\in\Delta^k\, .
\]
\end{proof}
\begin{remark} If $k=n-1$, then $\left(u-\phi_u \log \|z''\|\right)\in\PSH{\Delta^n}$.
\end{remark}
\begin{lemma}\label{lem1} Let $u$ be a pluriharmonic function, and let $\{u_j\}$ be a sequence of plurisubharmonic functions that converges to $u$ in $dV_{2n}$ on $\Omega$ as $j\to \infty$. Then $\{u_j\}$ converges to $u$ in capacity, as $j\to \infty$.
\end{lemma}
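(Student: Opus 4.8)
The plan is to reduce the statement to a claim about a sequence of plurisubharmonic functions tending to $0$, and then to control the two one-sided sublevel sets separately. Since $u$ is pluriharmonic, both $u$ and $-u$ are plurisubharmonic, so for each $j$ the function $w_j := u_j - u = u_j + (-u)$ is plurisubharmonic on $\Omega$, and $\|w_j\|_{L^1(\Omega')} = \|u_j - u\|_{L^1(\Omega')}\to 0$ for every $\Omega'\Subset\Omega$. Thus it suffices to prove that a sequence $\{w_j\}\subset\PSH{\Omega}$ with $w_j\to 0$ in $L^1_{loc}$ converges to $0$ in capacity, that is, $\mathrm{Cap}(\{|w_j|>\epsilon\}\cap K,\Omega)\to 0$ for every $\epsilon>0$ and every $K\Subset\Omega$.

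First I would treat the upper sublevel set $\{w_j>\epsilon\}$. Because $w_j\to 0$ in $L^1_{loc}$ and the limit $0$ is continuous, the classical Hartogs lemma for plurisubharmonic functions gives, for each fixed $K\Subset\Omega$, that $w_j<\epsilon$ on $K$ for all large $j$. Hence $\{w_j>\epsilon\}\cap K=\emptyset$ eventually, and this part contributes zero capacity. As a by-product, on a fixed $\Omega'\Subset\Omega$ with $K\Subset\Omega'$ one obtains upper bounds $w_j\le M_j$ with $M_j:=\sup_{\Omega'}w_j\to 0$ (the upper inequality $\limsup M_j\le 0$ is Hartogs, while the lower bound $M_j\ge \mathrm{vol}(\Omega')^{-1}\int_{\Omega'}w_j\to 0$ comes from the $L^1$ convergence); these will be used to normalize the sign of $w_j$ below.

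The decisive step is the lower sublevel set $E_j:=\{w_j<-\epsilon\}\cap K$. Here I would invoke the Chern--Levine--Nirenberg inequality: there is a constant $C=C(K,\Omega',\Omega)$ such that for every $v\in\PSH{\Omega}$ with $-1\le v\le 0$ and every plurisubharmonic $\phi\le 0$ on $\Omega$,
\[
\int_K (-\phi)\,(dd^c v)^n \le C\,\|\phi\|_{L^1(\Omega')}\, .
\]
Applying this to $\phi=w_j-M_j\le 0$, and using that $-(w_j-M_j)>\epsilon$ on $E_j$ together with the positivity of the measure $(dd^c v)^n$, gives for every admissible $v$
\[
\int_{E_j}(dd^c v)^n \le \frac1\epsilon\int_K \big(-(w_j-M_j)\big)\,(dd^c v)^n \le \frac{C}{\epsilon}\,\|w_j-M_j\|_{L^1(\Omega')}\, .
\]
Taking the supremum over all $v$ with $-1\le v\le 0$ yields $\mathrm{Cap}(E_j,\Omega)\le \tfrac{C}{\epsilon}\|w_j-M_j\|_{L^1(\Omega')}$, and the right-hand side tends to $0$ since $\|w_j\|_{L^1(\Omega')}\to 0$ and $M_j\to 0$.

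Combining the two parts gives $\mathrm{Cap}(\{|w_j|>\epsilon\}\cap K,\Omega)\to 0$, which is the asserted convergence in capacity. I expect the main obstacle to be precisely the lower estimate: the upper side is automatic from Hartogs' lemma because the limit $0$ is continuous, whereas the lower side genuinely uses the plurisubharmonicity of $w_j$ through the Chern--Levine--Nirenberg bound, and it is exactly there that the passage from $L^1$ control to capacity control occurs. The one technical point to handle carefully is the sign normalization $\phi=w_j-M_j$, which secures the hypotheses of the Chern--Levine--Nirenberg inequality while leaving the relevant sublevel sets unaffected in the limit.
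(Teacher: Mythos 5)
Your proof is correct, and its decisive step runs through a genuinely different mechanism than the paper's. Both arguments treat the upper side identically: Hartogs' lemma kills $\{u_j>u+\delta\}$ on compacts for large $j$ (this is exactly where the continuity/pluriharmonicity of the limit enters), so everything reduces to the sublevel set $\{u_j<u-\delta\}$. For that set you invoke the Chern--Levine--Nirenberg inequality with a plurisubharmonic weight, $\int_K(-\phi)\,(dd^cv)^n\le C_{K,\Omega'}\|\phi\|_{L^1(\Omega')}$ uniformly over all competitors $-1\le v\le 0$, which converts capacity directly into the $L^1$-norm of $w_j-M_j$; this is purely local and classical (Bedford--Taylor/Demailly) and requires no existence theorems. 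The paper instead restricts the competitors in the capacity to $h_{D,L}^*\le\varphi\le 0$, chooses $\phi\in\mathcal{E}_0(D)$ solving the Dirichlet problem $(dd^c\phi)^n=dV_{2n}$ with $A\phi\le-1$ on $L$, and applies an energy-monotonicity lemma in Cegrell's classes (Lemma 3.3 of the cited paper of \AA hag--Cegrell--Czy\.z--Ph\d{a}m) to dominate $\int_D(u-u_j+\varepsilon)\,(dd^c\varphi)^n$ by $A^n\int_D(u-u_j+\varepsilon)\,dV_{2n}$ --- morally the same conversion of capacity into an $L^1$ quantity, but routed through the solution of a Monge--Amp\`ere equation and the Cegrell-class machinery used throughout the paper. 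Your route is more elementary and self-contained; the paper's has the advantage of staying entirely inside its own toolkit. Two small points to tidy in your write-up: since $M_j=\sup_{\Omega'}w_j$ may be slightly negative, on $E_j$ you only get $-(w_j-M_j)>\epsilon+M_j$, so the estimate should read $\operatorname{Cap}(E_j)\le(\epsilon+M_j)^{-1}C\|w_j-M_j\|_{L^1(\Omega')}$, which still tends to $0$ because $M_j\to 0$; and the CLN inequality only needs $\phi\le 0$ on a neighbourhood of $K$ (say on $\Omega'$), which $w_j-M_j$ satisfies, rather than on all of $\Omega$ as you stated.
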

\begin{proof} Let $K\Subset L\Subset D\Subset \Omega$, and $\delta>0$. We shall prove that
\[
\text{Cap}_D(\{|u_j-u|>\delta\}\cap K)\to 0,\text{ as } j\to +\infty\, ,
\]
Choose $\phi\in\Eo(D)$ that satisfies $\ddcn{\phi}=dV_{2n}$. Take $A>0$ such that $A\phi\leq -1$ on $L$. Let
$0<\varepsilon<\frac{\delta}{2}$. Hartog's theorem yields that there exists a $j_0$ such that
\[
u_j\leq u+\varepsilon \text{ for all } z\in D, \text{ and } j\geq j_0\, .
\]
By Lemma 3.3 in \cite{ahag_czyz_cegrell_hiep},we have that
\begin{multline*}
\text{Cap}_D(\{|u_j-u|>\delta\}\cap K)=\text{Cap}_D(\{u-u_j>\delta\}\cap K)= \text{Cap}_D(\{u_j<u-\delta\}\cap K)\\
=\sup\left\{\int_{\{u_j<u-\delta\}\cap K}\ddcn{\varphi}:\varphi\in\PSH{D},\; -1\leq\varphi\leq 0\right\}\\
=\sup\left\{\int_{\{u_j<u-\delta\}\cap K}\ddcn{\varphi}:\varphi\in\PSH{D},\; h_{D,L}^*\leq\varphi\leq 0\right\}\\
\leq \frac1{\delta}\sup\left\{\int_D(u-u_j+\varepsilon)\ddcn{\varphi}:\varphi\in\PSH{D},\; h_{D,L}^*\leq\varphi\leq 0\right\}\\
\leq \frac1{\delta}\sup\left\{\int_D(u-u_j+\varepsilon)\ddcn{\varphi}:\varphi\in\PSH{D},\; A\phi\leq\varphi\leq 0\right\}\\
\leq \frac1{\delta}\int_D(u-u_j+\varepsilon)\ddcn{A\phi}=\frac{A^n}{\delta}\int_D(u-u_j+\varepsilon)\ddcn{\phi}\\=
\frac{A^n}{\delta}\int_D(u-u_j+\varepsilon)\, dV_{2n}
\leq \frac{A^n}{\delta}\left(\int_D|u-u_j| dV+\varepsilon V_{2n} (D)\right)\, .
\end{multline*}
Hence,
\[
\limsup_{j\to +\infty}\,\text{Cap}_D(\{|u_j-u|>\delta\}\cap K)\leq \varepsilon\frac{A^n V_{2n} (D)}{\delta}\quad \text{ for all } \varepsilon>0\, .
\]
Thus,
\[
\text{Cap}_D\left(\left\{|u_j-u|>\delta\right\}\cap K\right)\to 0,\text{ as } j\to +\infty\, ,
\]
\end{proof}
\begin{theorem}\label{capacity} Let $u\in\PSH{\Delta^n}$, $u\leq 0$. Then we have that
\[
\frac { u(z',rz'') } { |\log r |} \to \phi_u
\]
in capacity on $\Delta^k\times \left(\Delta_{\frac 1 r}\right)^{n-k}$, as $r\to 0^+$. Here $\Delta_{\frac 1 r}\subseteq \C$ denotes the disc of radius $\frac 1 r$.
\end{theorem}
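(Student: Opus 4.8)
The plan is to reduce the statement to an $L^1_{\mathrm{loc}}$ convergence and then invoke Lemma~\ref{lem1}. Write $c=\phi_u$, which by Theorem~\ref{constantheorem} is a constant, and set
\[
v_r(z)=\frac{u(z',rz'')}{|\log r|}\, ,
\]
a non-positive plurisubharmonic function on $\Delta^k\times(\Delta_{1/r})^{n-k}$. Since the constant $c$ is pluriharmonic, Lemma~\ref{lem1} shows that it suffices to prove $v_r\to c$ in $L^1_{\mathrm{loc}}(\Delta^k\times\CEP{n-k})$ as $r\to0^+$: fixing $D\Subset\Delta^k\times\CEP{n-k}$, for $r$ small all $v_r$ belong to $\PSH{D}$, and $L^1$-convergence to $c$ then yields convergence in capacity on $D$. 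I would establish the $L^1$ statement by controlling the positive and negative parts of $v_r-c$ separately, and it suffices to do this for compact sets of product form $K=K'\times B''$, where $K'\Subset\Delta^k$ and $B''\subset\CEP{n-k}$ is a Euclidean ball.

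For the positive part I would use the global bound coming from the proof of Theorem~\ref{constantheorem}: letting $\epsilon\to0$ there gives $u(z',z'')\le -\phi_u\log\|z''\|$ on all of $\Delta^n$. Substituting $z''\mapsto rz''$ and dividing by $|\log r|$ yields $v_r(z)-c\le (-\phi_u)\frac{\log\|z''\|}{|\log r|}$, so that $(v_r-c)^+\le (-\phi_u)\frac{(\log\|z''\|)^+}{|\log r|}\to0$ uniformly on $K$. Hence $\int_K (v_r-c)^+\,dV\to0$.

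The negative part is the main obstacle, and here I would bring in the Lelong number through the convexity of spherical means. For a.e.\ $z'$ the slice $u(z',\cdot)$ is plurisubharmonic and not identically $-\infty$; its spherical mean $\lambda_{z'}(\rho)$ over $\{\|w\|=\rho\}$ is convex and increasing in $\log\rho$, with limiting slope $\nu_u(z',\cdot)(0)=:\nu(z')$ as $\rho\to0^+$. Convexity forces $\lambda_{z'}(\rho)\ge \nu(z')\log\rho+D(z')$ for $\rho\le\rho_0$, where $z'\mapsto D(z')$ is locally integrable. Writing $\int_K v_r\,dV$ as an integral of $u$ over $K'\times(rB'')$ in polar coordinates, inserting this lower bound, changing variables $w=rz''$, and dividing by $|\log r|$, the term $D(z')$ and the remainder contribute $O(1/|\log r|)$ and disappear, leaving
\[
\liminf_{r\to0^+}\int_K v_r\,dV\ \ge\ V(B'')\int_{K'}\bigl(-\nu(z')\bigr)\,dV(z')\, .
\]
The point is now that $-\nu(z')=\phi_u=c$ off the pluripolar exceptional set of Definition~\ref{new function}, which has Lebesgue measure zero; hence the right-hand side equals $c\,V(B'')V(K')=c\,V(K)$.

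Combining the two bounds closes the argument: the positive-part estimate gives $\limsup_r\int_K v_r\le cV(K)$, while the negative-part estimate gives $\liminf_r\int_K v_r\ge cV(K)$, so $\int_K v_r\to cV(K)$; together with $\int_K(v_r-c)^+\to0$ this forces $\int_K(c-v_r)^+\to0$ and therefore $\int_K|v_r-c|\,dV\to0$. Thus $v_r\to c$ in $L^1_{\mathrm{loc}}$, and Lemma~\ref{lem1} upgrades this to convergence in capacity, as required. I expect the delicate points to be the uniform-in-$z'$ integrability needed to discard the lower-order terms in the negative-part estimate, and the justification that the exceptional set where $-\nu(z')\ne c$ is Lebesgue-null.
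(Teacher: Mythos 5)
Your architecture --- reduce to $L^1_{\mathrm{loc}}$ convergence to the pluriharmonic constant $c=\phi_u$ and then invoke Lemma~\ref{lem1} --- is exactly the paper's (the paper gets the $L^1$ statement from the monotone convergence $\phi_u(z',r)=\max_{\|z''\|=1}u(z',rz'')/|\log r|\nearrow\phi_u$ together with Theorem~3.2.12 of \cite{Ho}, then applies Lemma~\ref{lem1}), and your positive-part estimate from $u(z',z'')\le-\phi_u\log\|z''\|$ is correct. The gap is in the negative part, which you rightly call the main obstacle: the inequality you attribute to convexity goes the wrong way. For a convex function $t\mapsto\lambda_{z'}(e^t)$ whose slope has limit $\nu(z')$ as $t\to-\infty$, the slopes are everywhere $\ge\nu(z')$ (they are non-decreasing), so convexity yields the \emph{upper} bound $\lambda_{z'}(\rho)\le\nu(z')\log\rho+C(z')$; the lower bound $\lambda_{z'}(\rho)\ge\nu(z')\log\rho+D(z')$ with finite $D(z')$ is false in general, because $\lambda_{z'}(\rho)-\nu(z')\log\rho$ is non-decreasing in $\rho$ and may tend to $-\infty$ as $\rho\to0^+$. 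Concretely, take $n-k=1$ and $u(z)=-\sqrt{-\log|z_n|}$: every slice has Lelong number $\nu(z')=0$, so your claim would read $\lambda_{z'}(\rho)\ge D(z')>-\infty$ for small $\rho$, yet $\lambda_{z'}(\rho)=-\sqrt{-\log\rho}\to-\infty$. The Lelong number only captures the leading logarithmic singularity; it never provides a lower bound of the form $\nu\log\|z''\|+O(1)$, since sub-logarithmic unboundedness is invisible to it. (In this example the theorem itself still holds, as $v_r\approx-|\log r|^{-1/2}\to0$; it is your mechanism, not the statement, that fails.)

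What is missing is precisely the compactness input the paper imports from H\"ormander: for a family of plurisubharmonic functions uniformly bounded above on compact sets, either the family tends to $-\infty$ locally uniformly or it has $L^1_{\mathrm{loc}}$ limit points, which are then identified via a Hartogs-type lemma; the monotone convergence of the fiberwise maxima $\phi_u(\cdot,r)\nearrow\phi_u$ excludes the first alternative and identifies the limit as the constant $\phi_u$. This is how one rules out the escape of mass to $-\infty$ on small sets without any pointwise lower bound on slices. If you want to keep a hands-on argument, the correct monotone quantity to use is $\lambda_{z'}(\rho)/|\log\rho|$, which \emph{increases} to $-\nu(z')$ as $\rho\searrow0$ (the spherical-mean analogue of the monotonicity of $\phi_u(z',r)$); integrating this in $z'$ by monotone convergence (the integrals are finite for fixed $\rho$ since $u\in L^1_{\mathrm{loc}}$), and controlling the factor $|\log(r\rho)|/|\log r|\to1$, gives the liminf estimate you want. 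Your remaining worry --- that the exceptional set where $-\nu(z')\neq c$ is Lebesgue-null --- is not an issue, since pluripolar sets have Lebesgue measure zero.
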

\begin{proof}
From
\[
\phi_u(z',r)=\max_{ \|z''\|=1 }\frac { u(z',rz'') } { |\log r |} \nearrow \phi_u
\]
as $r\to 0^+$, and Theorem 3.2.12 in \cite{Ho} we get that
\[
\frac { u(z',rz'') } { |\log r |} \to \phi_u \quad \text{ in } dV_{2n} \;\text{ on } \Delta^k\times (\Delta_{\frac 1 r})^{n-k}
\]
as $r\to 0^+$. We complete this proof by using Lemma~\ref{lem1} and obtain that
\[
\frac { u(z',rz'') } { |\log r |} \to \phi_u
\]
in capacity on $\Delta^k\times (\Delta_{\frac 1 r})^{n-k}$ as $r\to 0^+$.
\end{proof}
\begin{theorem}\label{classE} Let $u\in\mathcal E (\Delta^n)$. Then $\phi_u$ is identically $0$.
\end{theorem}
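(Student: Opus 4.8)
The plan is to show that the constant value of $\phi_u$ furnished by Theorem~\ref{constantheorem} must be $0$. Since $u\in\E(\Delta^n)$, I would first fix a polydisc $\omega\Subset\Delta^n$ and replace $u$ by a function $\tilde u\in\F(\Delta^n)$ with $\tilde u=u$ on $\omega$. Because $\phi$ is determined by the germs of the slices along $\{z''=0\}$, we have $\phi_{\tilde u}=\phi_u=-c$ wherever $(z',0)\in\omega$; applying Theorem~\ref{constantheorem} to $\tilde u$ (which is plurisubharmonic and $\leq 0$) shows $\phi_{\tilde u}$ is constant, hence $\phi_{\tilde u}\equiv -c$ on all of $\Delta^k$ for some $c\geq 0$. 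It therefore suffices to prove $c=0$, and I would argue by contradiction, assuming $c>0$.

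Next I would record the pointwise consequence of $\phi_{\tilde u}\equiv -c$. Running the estimate from the proof of Theorem~\ref{constantheorem} with $z_0'$ arbitrary gives $\tilde u(z',z'')\leq (c-\epsilon)\log\|z''\|$ on all of $\Delta^n$, for every $\epsilon\in(0,c)$. Letting $z''\to 0$ forces $\tilde u\equiv -\infty$ on the subvariety $S=\Delta^k\times\{0\}^{n-k}$. Since $k\geq 1$ under the standing assumption $1\leq k<n$, the restriction $z'\mapsto\tilde u(z',0)$ is plurisubharmonic on $\Delta^k$ or identically $-\infty$; as it already equals $-\infty$ on the nonempty open set $S\cap\omega$, it must be identically $-\infty$ on the whole connected variety $S$. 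This propagation step is exactly what fails when $k=0$, in agreement with the classical single-point examples, so positivity of the dimension is essential here.

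The heart of the matter is then to contradict $\tilde u\in\F(\Delta^n)$. The point is that $S$ reaches $\partial\Delta^n$ along $\partial\Delta^k\times\{0\}^{n-k}$, and the bound $\tilde u\leq (c-\epsilon)\log\|z''\|$ shows $\tilde u(z)\to -\infty$ as $z$ approaches such a boundary point; Theorem~\ref{capacity} makes this quantitative, yielding $\tilde u(z',rz'')\sim -c|\log r|$ in capacity as $r\to 0^+$. Such behaviour is incompatible with membership in $\F(\Delta^n)$, whose elements are decreasing limits of functions in $\Eo(\Delta^n)$ with uniformly bounded mass and hence have vanishing generalized boundary values; equivalently, no function of $\F(\Delta^n)$ can be identically $-\infty$ on a positive-dimensional subvariety clustering at $\partial\Delta^n$. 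I expect this last implication to be the main obstacle: the Monge--Amp\`ere mass of $\tilde u$ is finite (indeed the truncations of the model $c\log\|z''\|$ carry no mass at all), so the obstruction is not one of mass but of boundary values, and the argument must extract from $\tilde u\in\F(\Delta^n)$ a genuine lower bound near $\partial\Delta^n$ that is then violated on $S$. A natural device is a comparison of the type in Proposition~\ref{case2_lem1}, applied to $\tilde u$ and a suitable $v+w$ on a neighborhood of $\{\tilde u=-\infty\}$, which would give $\tilde u\geq v$ globally with $v$ finite on $S$; verifying the hypothesis $\ddcn{\tilde u}(\{\tilde u>-\infty\})=0$ required to invoke that proposition, together with producing the quantitative boundary estimate, is where the real work lies.
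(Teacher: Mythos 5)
Your argument tracks the paper's proof up to the decisive point: constancy of $\phi_u$ (Theorem~\ref{constantheorem}) plus the global estimate $u(z',z'')\leq (c-\epsilon)\log\|z''\|$ are exactly the paper's first two steps. But the contradiction you then propose rests on a false principle, and you yourself leave its proof open (``where the real work lies''). You claim that no function of $\F(\Delta^n)$ can be identically $-\infty$ on a positive-dimensional subvariety clustering at $\partial\Delta^n$, and you assert explicitly that ``the obstruction is not one of mass but of boundary values.'' Both statements are refuted by the paper's own Example~\ref{example1}: already in Step~1 there is $u\in\F(\Delta^n)$ with $\ddcn{u}=c\delta_0$ whose $-\infty$ set is the union of the coordinate hyperplane sections of $\Delta^n$ --- positive-dimensional subvarieties reaching the boundary --- and in the full example even $\overline{\{u=-\infty\}}=\Omega$. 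So vanishing ``generalized boundary values'' of $\F$-functions coexist with $-\infty$ sets that are dense in the domain, and no boundary-value obstruction of the kind you want exists. The fallback via Proposition~\ref{case2_lem1} does not repair this: its hypothesis $\ddcn{\tilde u}(\{\tilde u>-\infty\})=0$ is not something you can verify for $\tilde u$, and no candidates $v,w$ are produced.

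The actual obstruction is one of mass, and it is the paper's one-line conclusion. The bound $u\leq (c-\epsilon)\log\|z''\|$ says far more than $u|_S\equiv-\infty$: letting $\epsilon\to 0$, and using $\log\|z''\|\leq\log\|z-(w',0)\|$ near any point $(w',0)$, it gives the Lelong number bound $\nu_u(z)\geq c>0$ at \emph{every} point $z$ of $S=\Delta^k\times\{0\}^{n-k}$. For $u\in\E(\Delta^n)$ the measure $\ddcn{u}$ is a well-defined Radon measure and carries an atom of mass at least $\nu_u(z)^n\geq c^n$ at each such point; since $S$ is uncountable (this is where $k\geq 1$ enters, matching your remark about $k=0$), any compact set meeting $S$ would receive infinite mass, a contradiction with local finiteness. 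Your observation that ``the truncations of the model $c\log\|z''\|$ carry no mass'' shows only that a naive mass count of truncations is inconclusive; the model $c\log\|z''\|$ itself does not belong to $\E(\Delta^n)$, and Theorem~\ref{classE} detects this precisely through Lelong numbers and the resulting atoms, not through boundary behaviour. (If one insists on staying inside $\F$, a correct mass argument is also available: $\tilde u\leq\max(c\log\|z''\|,N\log\|z\|)\in\Eo(\Delta^n)$, whose total Monge--Amp\`ere mass grows without bound as $N\to\infty$, contradicting the finite-mass comparison principle in $\F$ --- but again this is a mass obstruction, not a boundary-value one.)
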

\begin{proof}
Assume that $\phi_u<0$. Hence
\[
u(z',z'')\leq - \phi_u \log \|z''\| \qquad \text{ on } \Delta^n\, .
\]
Hence, $\nu_u (z)\geq -\phi_u$ on $\Delta^k\times\{0\}^k$. This is not possible, since $u\in\mathcal E(\Delta^n)$.
\end{proof}

Example~\ref{example2} shows that the converse of Theorem~\ref{classE} is in generally false.

\begin{example}\label{example2}
 In this example we construct a function $u\in\PSH{\Omega}$, $u\leq 0$ such that
\[
 u(z) > -\infty \qquad \text{ for all } z\in\Omega\,
\]
but $u\not\in\E(\Omega)$. We can assume that $\Omega\subset\Delta^n$. Let $u$ be defined on $\Delta^n$ as
\[
u(z)=\sum_{j=1}^\infty \max\left(\frac 1 {2^j}\log \frac {|z_1-\frac 1 {2^j}|}{|1-\frac {z_1}{2^j}|},\frac {2^j} {j} \log |z_2|,\log |z_3|,\ldots,\log |z_n|, -2^j\right)\, .
\]
We start by proving that $u(z) >-\infty$ for all $z\in\Delta^n$. If $z_1=0$, then we have that
\[
u(0)\geq\sum_{j=1}^\infty\frac 1 {2^j}\log \frac 1 {2^j}>-\infty\, .
\]
If $z_1\not =0$ we choose $j_0$ be such that $|z_1|>\frac 1 {2^{j_0-1}}$. Hence
\[
u(z)\geq\sum_{j=1}^{j_0}-2^j+\sum_{j=j_0+1}^\infty\frac 1 {2^j}\log \frac {|z_1-\frac 1 {2^j}|}{|1-\frac {z_1}{2^j}|}\geq\sum_{j=1}^{j_0}-2^j+\log \frac {|z_1|}{4}\sum_{j=j_0+1}^\infty\frac 1 {2^j}>-\infty\, .
\]
Next, we shall show that $u\not\in\mathcal E(W)$ for all neighbourhoods $W$ of $0$. Set
\[
u_k=\sum_{j=1}^k \max\left(\frac 1 {2^j}\log \frac {|z_1-\frac 1 {2^j}|}{|1-\frac {z_1}{2^j}|},\frac {2^j} {j} \log |z_2|, \log |z_3|,\ldots,\log |z_n|, -2^j\right)
\]
We have $u_k\in\mathcal E_0 (\Delta^n )$ and $\varphi_k\searrow u$ as $k\to\infty$, and
\begin{multline*}
(dd^c u_k )^n\\ \geq \sum_{j=1}^k \left( dd^c \max\left(\frac 1 {2^j}\log \frac {|z_1-\frac 1 {2^j}|}{|1-\frac {z_1}{2^j}|},\frac {2^j} {j} \log |z_2|, \log |z_3|,...,\log |z_n|, -2^j\right) \right)^n
\\=\sum_{j=1}^k\frac 1 j \sigma_{\{ |z_1-\frac 1 {2^j}|=e^{-4^j} \}}\times \sigma_{\{ |z_2|=e^{-j} \}}\times \sigma_{\{ |z_3|=e^{-2^j} \}}...\times \sigma_{\{ |z_n|=e^{-2^j} \}}\, ,
\end{multline*}
where $\sigma_{\{ |z_j|=r \}}$ is the normalized surface measure on $\{ |z_j|=r \}$. Hence,
\[
\int_W (dd^c u_k )^n\to\infty
\]
as $k\to\infty$ for all neighbourhood $W$ of $0$.
\end{example}
\begin{definition} For each $u\in\PSH{\Omega_1\times\Omega_2}$ and $w_2\in\Omega_2$ we define
\begin{multline*}
E(u,t,w_2)= \left\{ z_1\in\Omega_1: u(z_1,z_2)\leq t\log \|z_2-w_2\|+O(1),\text{ for every } z_2\in\Omega_2 \right\}\\
=\{ z_1\in\Omega_1: \nu_{u(z_1,.)}(w_2)\geq t \}\, .
\end{multline*}
\end{definition}
\begin{theorem} Let $u\in\E (\Omega_1\times\Omega_2)$. Then
\[
\bigcup_{t > 0} E(u,t,w_2)
\]
is a pluripolar set in $\Omega_1$ for all $w_2\in\Omega_2$.
\end{theorem}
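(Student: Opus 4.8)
The plan is to reduce the statement to Theorem~\ref{classE} by localizing in $\Omega_1$ and passing to a product of polydiscs. First I would rewrite the set in question. Since
\[
E(u,t,w_2)=\{z_1\in\Omega_1:\nu_{u(z_1,\cdot)}(w_2)\geq t\},
\]
we have
\[
\bigcup_{t>0}E(u,t,w_2)=\{z_1\in\Omega_1:\nu_{u(z_1,\cdot)}(w_2)>0\}=:A.
\]
Because a subset of $\Omega_1$ which is pluripolar in a neighbourhood of each of its points is pluripolar (Josefson's theorem), and $\Omega_1$ can be covered by countably many polydiscs, it suffices to prove that $A$ is pluripolar near an arbitrary fixed point $z_1^0\in\Omega_1$.

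Second, I would set up the local product picture. Fix polydiscs $P_1\ni z_1^0$ in $\Omega_1$ and $P_2\ni w_2$ in $\Omega_2$ with $P_1\times P_2\Subset\Omega_1\times\Omega_2$. After an affine change of coordinates in each factor separately — under which the product structure is preserved and the Lelong numbers of the slices are unchanged — I may assume $P_1=\Delta^{n_1}$, $P_2=\Delta^{n_2}$ and $w_2=0$. Since membership in $\E$ is a local property, the restriction satisfies $u\in\E(\Delta^{n_1}\times\Delta^{n_2})$, and since every function in $\E$ is non-positive we have $u\leq0$ there. Writing $z_1$ for the first block of variables and $z_2$ for the second, this is precisely the setting of Definition~\ref{new function} and Theorem~\ref{classE} with $k=n_1$ and $n=n_1+n_2$.

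Third, I would invoke the two facts already recorded in the text. By Theorem~\ref{classE}, $\phi_u\equiv0$ on $\Delta^{n_1}$. By the result of~\cite{bed_tay2} quoted before Theorem~\ref{constantheorem}, the set $N:=\{z_1:\phi_u(z_1)\neq-\nu_{u(z_1,\cdot)}(0)\}$ is pluripolar in $\Delta^{n_1}$. Now $\phi_u=\left(-\nu_{u(z_1,\cdot)}(0)\right)^*\geq-\nu_{u(z_1,\cdot)}(0)$ always, so combining this with $\phi_u\equiv0$ gives
\[
A\cap\Delta^{n_1}=\{z_1:\nu_{u(z_1,\cdot)}(0)>0\}=\{z_1:-\nu_{u(z_1,\cdot)}(0)<0=\phi_u(z_1)\}\subseteq N,
\]
hence $A\cap\Delta^{n_1}$ is pluripolar. (Any slice with $u(z_1,\cdot)\equiv-\infty$ lies in the polar set $\{z_1:u(z_1,z_2^0)=-\infty\}$ for a fixed $z_2^0$ and can be absorbed without affecting pluripolarity.) Letting $z_1^0$ range over $\Omega_1$ and taking a countable subcover then completes the argument.

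Finally, I expect the only genuine subtlety to be bookkeeping in the reduction step: verifying that the restriction of $u$ to the product polydisc still lies in $\E$ and that the slice Lelong numbers transform correctly under the affine changes of coordinates. The analytic content — that $\phi_u$ vanishes identically and agrees with $-\nu_{u(z_1,\cdot)}(0)$ off a pluripolar set — is already supplied by Theorem~\ref{classE} and the cited Bedford--Taylor result, so once the localization is in place the conclusion is immediate.
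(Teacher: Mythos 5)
Your proposal is correct and follows essentially the same route as the paper's own proof: localize to $\Delta^{n_1}\times\Delta^{n_2}$ with $w_2=0$, invoke Theorem~\ref{classE} to get $\phi_u\equiv 0$, and then identify $\bigcup_{t>0}E(u,t,0)$ with (a subset of) the exceptional set $\{z_1:\phi_u(z_1)\neq-\nu_{u(z_1,\cdot)}(0)\}$, which is pluripolar by the cited Bedford--Taylor result. The only difference is that you spell out the details the paper leaves implicit (the Josefson-type gluing of local pluripolarity, the inequality $\phi_u\geq-\nu_{u(z_1,\cdot)}(0)$, and the $-\infty$ slices), which is a welcome tightening but not a different argument.
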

\begin{proof} Since this problem is purely local we can without loss of generality assume that $\Omega_1 = \Delta^k$, $\Omega_2 = \Delta^{n-k}$ and $w_2=0$. Theorem \ref{classE} yields that $\phi_u\equiv 0$. We have that
\[
\bigcup_{t > 0} E(u,t,0)=\{ z'\in\Delta^k:\ \phi_u(z') \not = -\nu_u(z',.) (0) \}\, ,
\]
and therefore it follows that $\bigcup_{t > 0} E(u,t,0)$ is a pluripolar set in $\Delta^k$.
\end{proof}

\section{The toric case}\label{toric}

\begin{theorem}\label{torictheorem} Let $u\in\E(\Delta^n)$ be such that $u(z',z'') = u(z',|z_{k+1}|,...,|z_n|)$.
Then there exists a Borel pluripolar set $E$ in $\Delta^k$ such that
\[
(dd^c u)^n ( (\Delta^k\backslash E) \times\{0\}^{n-k} ) = 0\, .
\]
\end{theorem}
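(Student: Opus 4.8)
The plan is to reduce the full Monge–Amp\`ere mass statement to the Lelong-number information already extracted in Section~\ref{sec_necessary}. By Theorem~\ref{classE} we know $\phi_u\equiv 0$ for any $u\in\E(\Delta^n)$, and the preceding discussion gives that the exceptional set
\[
E=\bigl\{\,z'\in\Delta^k:\ \phi_u(z')\neq-\nu_u(z',\cdot)(0)\,\bigr\}
\]
is a Borel pluripolar subset of $\Delta^k$. Since $\phi_u\equiv 0$, for every $z'\notin E$ the slice Lelong number satisfies $\nu_u(z',\cdot)(0)=0$; that is, the restriction $u(z',\cdot)$ has zero Lelong number at the origin in $\Delta^{n-k}$ whenever $z'$ lies outside the thin set $E$. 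The goal is therefore to show that the part of $(dd^c u)^n$ sitting over $(\Delta^k\setminus E)\times\{0\}^{n-k}$ must vanish.

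The key idea is to exploit the toric (circular) symmetry $u(z',z'')=u(z',|z_{k+1}|,\dots,|z_n|)$. First I would use the symmetry to argue that the Monge–Amp\`ere measure $(dd^c u)^n$ is itself invariant under the torus action in the last $n-k$ variables, so that its restriction to the set $\Delta^n\cap\{z''=0\}$ can be analyzed fibrewise. Over a fixed $z'\notin E$, the vanishing of the Lelong number $\nu_{u(z',\cdot)}(0)=0$ means $u(z',\cdot)$ cannot have a logarithmic singularity at $z''=0$; combined with circular symmetry in each of the variables $z_{k+1},\dots,z_n$, this should force the function $u(z',\cdot)$ to place no Monge–Amp\`ere-type mass concentrated at the single point $\{z''=0\}$ of the fibre. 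The natural tool here is Theorem~\ref{capacity}, which tells us that $u(z',rz'')/|\log r|\to\phi_u\equiv 0$ in capacity as $r\to 0^+$; the convergence to zero of the rescaled functions is exactly the quantitative statement that no singular mass survives the blow-up at the central fibre.

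The main obstacle, and the step I would treat most carefully, is passing from the slice-by-slice vanishing of Lelong numbers to the statement about the full $n$-dimensional measure $(dd^c u)^n$ restricted to $(\Delta^k\setminus E)\times\{0\}^{n-k}$. A pointwise (in $z'$) vanishing result does not automatically integrate to a statement about the product set, since the $n$-fold wedge mixes derivatives in the $z'$ and $z''$ directions. The plan is to handle this by a blow-up/rescaling argument: consider the dilations $(z',z'')\mapsto(z',rz'')$, push forward or pull back $(dd^c u)^n$, and use the capacity convergence from Theorem~\ref{capacity} together with the continuity of the Monge–Amp\`ere operator along capacity-convergent monotone sequences (as in the Bedford–Taylor framework cited via \cite{bed_tay1,bed_tay2}) to show that the limit measure charges $\{z''=0\}$ only over the pluripolar set $E$ where $\phi_u$ disagrees with the actual Lelong number. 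Concretely, one estimates the mass $(dd^c u)^n\bigl((\Delta^k\setminus E)\times\{\|z''\|<\epsilon\}\bigr)$ and shows it tends to $0$ as $\epsilon\to 0^+$ by comparison with the rescaled masses, whose limit is controlled by $\phi_u\equiv 0$. I would also invoke Theorem~5.11 of \cite{cegrell_gdm} (mentioned in the introduction) to guarantee that $(dd^c u)^n$ does not charge $S\setminus\{u=-\infty\}$, so that any surviving mass on the central fibre must live on a pluripolar set, which we then identify with (a subset of) $E$.
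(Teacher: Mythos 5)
Your starting point, namely Theorem \ref{classE} giving $\phi_u\equiv 0$ together with the pluripolarity of the set where $\phi_u\neq -\nu_u(z',\cdot)(0)$, is sound, but the step you yourself flag as ``the main obstacle'' is exactly where the proposal breaks down, and the tools you name cannot fill it. Consider the rescaling idea first: Theorem \ref{capacity} controls the normalized functions $u(z',rz'')/|\log r|$, whose Monge--Amp\`ere measures are $|\log r|^{-n}$ times the pullback of $(dd^c u)^n$ under $(z',z'')\mapsto (z',rz'')$. Since the mass of $(dd^c u)^n$ near the central fibre is locally bounded, these normalized masses tend to $0$ for trivial reasons, whether or not $(dd^c u)^n$ charges the fibre. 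The function $u=\max(\log|z_1|,\ldots,\log|z_n|)$ is toric, has $\phi_u\equiv 0$, and satisfies the conclusion of Theorem \ref{capacity}, yet $(dd^c u)^n=c\,\delta_0$; so capacity convergence of the rescalings cannot by itself show that any mass disappears, and in particular it cannot distinguish measures carried by pluripolar sets (which are allowed) from measures that vanish on them (which must be excluded). Your fallback via Theorem 5.11 of \cite{cegrell_gdm} fails in precisely the hard case: zero slice Lelong numbers do not prevent $u|_S\equiv -\infty$ (take $u=-(-\log|z_n|)^{1/2}$, which is toric, lies in $\E(\Delta^n)$, and is $-\infty$ on the whole central fibre with all Lelong numbers zero), and when $u|_S\equiv -\infty$ the set $S\cap\{u=-\infty\}$ is all of $S$, hence not pluripolar in $S$; the paper's introduction points out that Theorem 5.11 settles only the case $u|_S\not\equiv -\infty$.

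What is missing is the structural step on which the paper's proof rests. One must first split the restricted measure by Cegrell's decomposition theorem (Theorem 6.3 of \cite{cegrell_gdm}): $1_{\Delta^k\times\{0\}^{n-k}}(dd^c u)^n=f(dd^c\varphi)^k+\nu$, where $\nu$ is carried by a Borel pluripolar set $E$ and $f(dd^c\varphi)^k$ vanishes on pluripolar sets; the theorem then amounts to proving $f(dd^c\varphi)^k=0$. To do this the paper does not work with $u$ at all: by Lemma 4.3 of \cite{ahag_czyz_cegrell_hiep} it produces a toric $v\in\F(\Delta^n)$, $v\geq u$, with $(dd^c v)^n=1_{\Delta_t^n}f(dd^c\varphi)^k$. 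Because this measure vanishes on pluripolar sets, $\nu_{v(z',\cdot)}(0)=0$ outside sets of small capacity, and quasicontinuity plus Dini's theorem upgrade the pointwise convergence $\phi_v(\cdot,r_j)\nearrow 0$ to uniform convergence off such sets. Only here does the toric hypothesis do real work: for a toric plurisubharmonic function a lower bound on the torus $\{\|z''\|=r\}$ propagates to the whole polydisc $\{\|z''\|\leq r\}$, giving $v\geq\epsilon_m(\log|z_{k+1}|+\cdots+\log|z_n|)$ there, so that $w_m=\max(v,\epsilon_m(\log|z_{k+1}|+\cdots+\log|z_n|))\nearrow 0$ while a mass-comparison argument bounds the mass of $(dd^c w_m)^n$ from below by $\int_{\Delta_t^k}f(dd^c\varphi)^k$ up to errors that vanish as $m\to\infty$; the two facts together force $\int_{\Delta_t^k}f(dd^c\varphi)^k=0$. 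Your proposal contains neither the decomposition, nor the auxiliary solution $v$, nor any concrete use of the toric hypothesis, and without these the slice-by-slice Lelong information genuinely does not integrate to the asserted statement about $(dd^c u)^n$.
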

\begin{proof}
Without loss generality we can assume that $u\in\F(\Delta^n)$. Theorem 6.3 in \cite{cegrell_gdm} yields that there exists a function $\varphi\in\Eo(\Delta^k)$, $0\leq f\in L^1((dd^c\varphi)^n)$, a non-negative Radon measure $\nu$ defined on $\Delta^k$, and a Borel pluripolar set $E\subset\Delta^k$ such that
\[
1_{ \Delta^k\times \{0\}^{n-k} }(dd^c u)^n = f(dd^c\varphi )^k+\nu\, ,
\]
and $\nu (\Delta^k\backslash E) = 0$. We shall prove that
\[
f(dd^c\varphi )^k=0\, .
\]
Fix $t\in (0,1)$. Thanks to Lemma 4.3 in \cite{ahag_czyz_cegrell_hiep}, we can find a function $v\in \F(\Delta^n)$ such that $v\geq u$, $(dd^c v)^n = 1_{ \Delta_t^n } f (dd^c\varphi )^k$ and
\[
v(z',z'') = v(z',|z_{k+1}|,\ldots,|z_n|)\, .
\]
Next choose a sequence  $\{r_j\}$ with $r_j\searrow 0$. By the quasicontinuity of $\phi_v (\cdot,r_j)$ (see e.g. \cite{bed_tay2}), we can find a decreasing sequence of open sets $\{G_m\}_{m\geq 1}$ in $\Delta_t^k$ such that
\[
\text{Cap}_{ \Delta^k } (G_m)<\frac 1 m\quad \text{and}\quad \phi_v (.,r_j)|_{\overline\Delta_t^k\backslash G_m}\text{ are continuous }.
\]
Furthermore, it can be chosen such that each element is continuous on $\overline\Delta_t^k\backslash G_m$ for all $j,m\geq 1$, and $\nu_{v(z',.)}(0) = 0$ on $\overline\Delta_t^k\backslash G_m$ for all $m\geq 1$. By Dini's theorem we have
$\phi_v (z',r_j)$ converges uniformly $0$ on $z'\in\overline\Delta_t^k\backslash G_m$, as $j\to\infty$, for all $m\geq 1$. Hence, for each $m$ we can choose $j_m$ such that
\[
\epsilon_m = -\min_{z'\in \overline\Delta_t^k\backslash G_m} \phi_v(z',r_{j_m})\searrow 0,\quad\text{ as } m\to\infty\, .
\]
Since $v(z',z'') = v(z',|z_{k+1}|,\ldots,|z_n|)$, we have that
\[
v(z',z'')\geq \epsilon_{j_m} (\log|z_{k+1}|+\ldots+\log|z_n|)\, ,
\]
for all $(z',z'')\in( \overline \Delta_t^k\backslash G_m )\times \overline\Delta_{r_{j_m}}^{n-k}$. Set
\[
w_m=\max (v,\epsilon_{j_m} (\log|z_{k+1}|+\ldots+\log|z_n|))\, ,
\]
and choose $v_l\in\mathcal E_0\cap C(\Delta^n)$ such that $v_l\searrow v$. Set
\[
h_{G_m,\Delta^k} = \sup\left\{\varphi\in\PSH{\Delta^k}: \varphi\leq -1\text{ on } G_m\right\}
\]
We have that
\begin{multline*}
\int_{ \overline\Delta_t^k\backslash G_m\times \overline{ \Delta }_{r_{j_m}}^{n-k} } ( dd^c w_m )^n\\ \geq \varlimsup\limits_{l\to\infty}\int_{ \overline\Delta_t^k\backslash G_m\times \overline{ \Delta }_{r_{j_m}}^{n-k} } \left( dd^c\max ( v_l,\epsilon_{j_m} (\log|z_{k+1}|+\ldots+\log|z_n|) -\frac 1 l ) \right)^n\, .
\end{multline*}
Since,
\[
\overline\Delta_t^k\backslash G_m\times \overline{ \Delta }_{r_{j_m}}^{n-k}\subset \left\{ v_l>\epsilon_{j_m} (\log|z_{k+1}|+\ldots+\log|z_n|) -\frac 1 l \right\}
\]
and $h(dd^c v_l)^n\to h(dd^c v)^n$ weakly as $l\to\infty$ for all $h\in\PSH{\Delta^n}\cap L^\infty(\Delta^n)$, we have
that
\begin{multline*}
\int_{ \overline\Delta_t^k\backslash G_m\times \overline{ \Delta }_{r_{j_m}}^{n-k} } ( dd^c w_m )^n\geq \varlimsup\limits_{l\to\infty}\int_{ \overline\Delta_t^k\backslash G_m\times \overline{ \Delta }_{r_{j_m}}^{n-k} } ( dd^c v_l )^n\\
\geq\varliminf\limits_{l\to\infty}\int_{ \Delta_t^k\times\Delta_{ r_{j_m} }^{ n-k } } ( 1 + h_{ G_{j_m},\Delta^k } ) ( dd^c v_l )^n
\geq\int_{ \Delta_t^k\times\Delta_{ r_{j_m} }^{n-k} } ( 1 + h_{ G_{j_m},\Delta^k } ) ( dd^c v )^n\\
=\int_{ \Delta_t^k} ( 1 + h_{ G_{j_m},\Delta^k } ) f ( dd^c \varphi )^k\, .
\end{multline*}
From the fact that
\[
\int_{ \Delta^k } (dd^c h_{ G_{j_m},\Delta^k } )^n = \text{Cap}_{ \Delta^k } ( G_{j_m} ) \searrow 0 \quad \text{ as } m\to\infty
\]
we get $h_{ G_{j_m},\Delta^k }\nearrow 0$ a.e on $\Delta^k$, as $m\to\infty$. This yields that
\[
\varliminf_{m\to\infty} \int_{ \overline\Delta_t^k\backslash G_m\times \overline{ \Delta }_{r_{j_m}}^{n-k} } ( dd^c w_m )^n\geq \int_{ \Delta_t^k} f ( dd^c \varphi )^k\, .
\]
On the other hand, since $v\leq w_m\nearrow 0$ as $m\to\infty$, we get that
\[
\varlimsup_{m\to\infty} \int_{ \overline\Delta_t^k \times \overline{ \Delta }_{r_{j_m}}^{n-k} } ( dd^c w_m )^n\leq\varlimsup_{m\to\infty} \int_{ \overline\Delta_t^k \times \overline{ \Delta }_{r_{j_1}}^{n-k} } ( dd^c w_m )^n\leq 0
\]
Thus,
\[
\int_{ \Delta_t^k} f ( dd^c \varphi )^k = 0
\]
To complete this proof let $t\to 1^{-}$.
\end{proof}

By combining Theorem~\ref{torictheorem} with Theorem~\ref{sufficient} we get the following corollary
\begin{corollary} \label{toricumea} Let $\mu$ be a non-negative Radon measure defined on $\Delta^k$ which vanish on every pluripolar sets in $\Delta^k$. Then there is exists no function $u\in\E(\Delta^n)$ such that
\[
u(z',z'') = u(z',|z_{k+1}|,\ldots,|z_n|)\quad\text{ and }\quad (dd^c u)^n = \mu\, .
\]
\end{corollary}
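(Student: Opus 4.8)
The plan is to argue by contradiction, letting Theorem~\ref{torictheorem} carry the analytic weight while the pluripolar-vanishing hypothesis on $\mu$ delivers the final contradiction. First I would fix the reading of the equation: since $u\in\E(\Delta^n)$ but $\mu$ is declared on $\Delta^k$, the identity $(dd^c u)^n=\mu$ must be understood in the dimensional sense of the Aim, namely $(dd^c u)^n=\mu\times\delta_0^{n-k}$, a measure carried by $S=\Delta^k\times\{0\}^{n-k}$; this is exactly the convention under which Theorem~\ref{sufficient} produces solutions. I would also record at once that $\mu$ is to be taken not identically zero, since for $\mu\equiv 0$ the function $u\equiv 0\in\E(\Delta^n)$ solves the equation trivially.

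So suppose, for contradiction, that there is a toric $u\in\E(\Delta^n)$, with $u(z',z'')=u(z',|z_{k+1}|,\ldots,|z_n|)$ and $(dd^c u)^n=\mu\times\delta_0^{n-k}$. The decisive step is to feed $u$ into Theorem~\ref{torictheorem}, which returns a Borel pluripolar set $E\subset\Delta^k$ with
\[
(dd^c u)^n\big((\Delta^k\backslash E)\times\{0\}^{n-k}\big)=0\, .
\]
Because the full mass of $(dd^c u)^n$ sits on $S$, evaluating the product measure $\mu\times\delta_0^{n-k}$ on $(\Delta^k\backslash E)\times\{0\}^{n-k}$ gives $\mu(\Delta^k\backslash E)=0$; equivalently, $\mu$ is carried by the pluripolar set $E$.

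The final step is the squeeze. By hypothesis $\mu$ vanishes on every pluripolar subset of $\Delta^k$, whence $\mu(E)=0$ as well. Adding the two pieces, $\mu(\Delta^k)=\mu(E)+\mu(\Delta^k\backslash E)=0$, contradicting $\mu\not\equiv 0$; hence no such $u$ exists. In this argument Theorem~\ref{sufficient} supplies the complementary half of the picture: it pins down which right-hand sides are solvable (those of the form $(dd^c\varphi)^k$ carried by $\{\varphi=-\infty\}$), so that the two theorems together say the toric solvable measures are precisely those concentrated on pluripolar sets, and such a measure can vanish on pluripolar sets only if it is zero. I expect no serious analytic obstacle here, since all the difficulty is already absorbed into Theorem~\ref{torictheorem}; the one point demanding care is the bookkeeping that the restriction of $\mu\times\delta_0^{n-k}$ to $(\Delta^k\backslash E)\times\{0\}^{n-k}$ really equals $\mu(\Delta^k\backslash E)$, and that the $E$ produced by Theorem~\ref{torictheorem} lives in the same $\Delta^k$ on which $\mu$ is assumed to ignore pluripolar sets.
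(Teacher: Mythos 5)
Your proposal is correct and is essentially the paper's own argument: the paper proves the corollary simply by invoking Theorem~\ref{torictheorem} (the pluripolar set $E$ carrying all the mass of $(dd^c u)^n$ on $\Delta^k\times\{0\}^{n-k}$) against the hypothesis that $\mu$ annihilates pluripolar sets, exactly as you do; the citation of Theorem~\ref{sufficient} in the paper serves only the contextual role you describe. Your two housekeeping remarks — reading $(dd^c u)^n=\mu$ as $\mu\times\delta_0^{n-k}$ and noting that $\mu\not\equiv 0$ must be assumed — correctly fill in what the paper leaves implicit.
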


\end{document}